\newtheorem{theorem}{Theorem}
\theoremstyle{definition}
\newtheorem{definition}{Definition}
\begin{document}
            
\huge

\begin{center}
Identities for Permutations with Fixed Points
\end{center}

\vspace{0.5cm}

\large

\begin{center}
Jean-Christophe Pain$^{a,b,}$\footnote{jean-christophe.pain@cea.fr}
\end{center}

\normalsize

\begin{center}
\it $^a$CEA, DAM, DIF, F-91297 Arpajon, France\\
\it $^b$Universit\'e Paris-Saclay, CEA, Laboratoire Mati\`ere en Conditions Extr\^emes,\\
\it 91680 Bruy\`eres-le-Ch\^atel, France\\
\end{center}

\vspace{0.5cm}

\begin{abstract}
We present identities for permutations with fixed points. The formulas are based on successive derivations or integrations of the determinant of a particular matrix. 
\end{abstract}

\section{Introduction}
The number of permutations with fixed points can be obtained by induction \cite{Tian2021}. It is of interest for combinatorial interpretations of relations for the number of derangements \cite{Rakotondrajao2007}. For instance, the number of derangements  can be interpreted as the sum of the values of the largest fixed points of all non-derangements of length $n-1$ \cite{Deutsch2010}.  In
the same paper, Deutsch and Elizalde \cite{Deutsch2010} showed that the analogous sum for the smallest fixed points equals the number of permutations of length $n$ with at least two fixed points. In this short article, we derive identities for the fixed-point statistics over the symmetric group.
\begin{definition}
Let us consider, for $x\in\mathbb{R}$ and any natural number $n\geq 2$, the $n\times n$ matrix 
\begin{equation*}
  M_x =
\left[ \begin{matrix}
     x & 1 & \cdots & 1 & 1\\ 
     1 & x & \cdots & 1 & 1\\ 
\vdots & \vdots & \ddots & \vdots & \vdots\\ 
     1 & 1 & \cdots & x & 1\\
     1 & 1 & \cdots & 1 & x\\
\end{matrix} \right] .
\end{equation*}
\end{definition}

The characteristic polynomial of the matrix $-M_0$ is 
\begin{equation*}
    \chi_{-M_0}(X)=\mathrm{det}(XI_n+M_0). 
\end{equation*}
Hence, since $\chi_{-M_0}(1)=0$, it follows that $1$ is an eigenvalue, the corresponding eigenspace being
\begin{equation*}
    E_1=\mathrm{ker} (-M_0-I_n) =\mathrm{ker} (M_0+I_n).  
\end{equation*}
We have 
\begin{equation*}
    x=(x_1,\cdots, x_n)\in E_1 \;\;\;\; \Longleftrightarrow \;\;\;\; x_1+x_2+\cdots+x_n=0,
\end{equation*}
which is a hyperplane of dimension $n-1$. This implies that the multiplicity of 1 is $n-1$. In addition, the sum of all eigenvalues is equal to the trace of the matrix $-M_0$, i.e., 0, and therefore $1-n$ is also an eigenvalue. We have
\begin{align*}
    x=(x_1,\cdots, x_n)\in E_{1-n} \;\;\;\; &\Longleftrightarrow \;\;\;\; \forall i\in[1,n], \;\;\;\; (n-1)x_i=\sum_{k=1,k\ne i}^nx_k\\
    &\Longleftrightarrow \;\;\;\; x_1=x_2=\cdots=x_n\\
    &\Longleftrightarrow \;\;\;\; x=\mathrm{span}\{(1,1,\cdots, 1)\},
\end{align*} 
and the multiplicity of $1-n$ is 1. Thus
\begin{align}\label{alter}
    \mathrm{det}\ M_x&=\mathrm{det}(xI_n+M_0) \nonumber\\
    &=(x-1+n)(x-1)^{n-1}.
\end{align}
By the Leibniz formula \cite{Strang2016}, the determinant of the matrix $M_x$ gives
\begin{equation*}
    \mathrm{det}\ M_x =\sum_{\sigma\in\mathfrak{S}_n}\epsilon(\sigma)\prod_{i=1}^n(m(x))_{\sigma(i),i},
\end{equation*}
where $\epsilon(\sigma)$ is the signature of the permutation $\sigma\in\mathfrak{S}_n$ and
\begin{displaymath}
    (m(x))_{\sigma(i),i}= 
    \begin{cases}
    1, & \text{if $\sigma(i)\ne i$;}\\
    x, & \text{if $\sigma(i)=i$,}
    \end{cases}
\end{displaymath}
yielding
\begin{align}\label{eq2}
   \mathrm{det}\ M_x &=\sum_{\sigma\in\mathfrak{S}_n}\epsilon(\sigma)\prod_{i\in\mathrm{fix}(\sigma)}x\nonumber\\
   &=\sum_{\sigma\in\mathfrak{S}_n}\epsilon(\sigma)x^{\mathrm{fix}(\sigma)},
\end{align}
where $\mathrm{fix}(\sigma)$ is the number of fixed points of $\sigma$.

\section{First relation by successive derivatives of the determinant}

\begin{theorem}
Let $x\in\mathbb{R}$ and $n$ be a natural number. We have
\begin{equation}\label{theo2}
    \sum_{\sigma\in\mathfrak{S}_n}\epsilon(\sigma)\mathrm{fix}(\sigma)\left(\mathrm{fix}(\sigma)-1\right)\cdots\left(\mathrm{fix}(\sigma)-k+1\right)x^{\mathrm{fix}(\sigma)-k}=(x-1)^{n-k-1}\frac{n!(x+n-k-1)}{(n-k)!}.
\end{equation}
\end{theorem}

\begin{proof}
By differentiating Eq.\ (\ref{eq2}) $k$ times, for $k\geq 1$, one gets
\begin{equation}\label{eqbis}
    \left(\mathrm{det}\ M_x\right)^{(k)}=\sum_{\sigma\in\mathfrak{S}_n}\epsilon(\sigma)\mathrm{fix}(\sigma)\left(\mathrm{fix}(\sigma)-1\right)\cdots\left(\mathrm{fix}(\sigma)-k+1\right)x^{\mathrm{fix}(\sigma)-k}.
\end{equation}
The Leibniz formula for the multiple derivative of a product gives, using Eq.\ (\ref{alter}):

\begin{equation*}
    \left(\mathrm{det}\ M_x \right)^{(k)}=\sum_{p=0}^k\binom{k}{p}(x-1+n)^{(p)}\left((x-1)^{n-1}\right)^{(k-p)}
\end{equation*}
and
\begin{displaymath}
    (x+1-n)^{(p)}=
\begin{cases}
x-1+n, & \text{if $p=0$;}\\
1, & \text{if $p=1$;}\\
0, & \text{otherwise},\\
\end{cases}
\end{displaymath}
and thus
\begin{align}\label{detmx}
    \left(\mathrm{det}\ M_x\right)^{(k)}&=(x-1+n)\left((x-1)^{n-1}\right)^{(k)}+k\left((x-1)^{n-1}\right)^{(k-1)}\nonumber\\
    &=(x-1)^{n-k-1}\frac{(n-1)!}{(n-k-1)!}\left(x-1+n+\frac{k(x-1)}{n-k}\right)\nonumber\\
    &=(x-1)^{n-k-1}\frac{n!(x+n-k-1)}{(n-k)!},
\end{align}
which, combined with Eq.\ (\ref{eqbis}), completes the proof.
\end{proof}

For $x=1$, according to Eq.\ (\ref{detmx}), one has
\begin{equation*}
    \left(\mathrm{det}\ M_1\right)^{(k)}=n!\,\delta_{k,n-1},
\end{equation*}
where $\delta_{i,j}$ is the usual Kronecker delta symbol. Setting $k=n-1$ in Eq.\ (\ref{eqbis}) yields
\begin{equation*}
    \sum_{\sigma\in\mathfrak{S}_n}\epsilon(\sigma)\mathrm{fix}(\sigma)\left(\mathrm{fix}(\sigma)-1\right)\left(\mathrm{fix}(\sigma)-2\right)\cdots\left(\mathrm{fix}(\sigma)-n+2\right)=n!.
\end{equation*}
As another example, setting $x=2$ in Eq.\ (\ref{theo2}) gives 
\begin{equation*}
    \sum_{\sigma\in\mathfrak{S}_n}\epsilon(\sigma)\mathrm{fix}(\sigma)\left(\mathrm{fix}(\sigma)-1\right)\left(\mathrm{fix}(\sigma)-2\right)\cdots\left(\mathrm{fix}(\sigma)-k+1\right)2^{\mathrm{fix}(\sigma)-k}=\frac{n!(n-k+1)}{(n-k)!}.
\end{equation*}

\section{Second identity by successive integrations of the determinant}

\begin{theorem}
For $k\in\mathbb{N}^*$ we have
\begin{equation}\label{eq8}
    \sum_{\sigma\in\mathfrak{S}_n}\epsilon(\sigma)\frac{(\mathrm{fix}(\sigma))!}{(\mathrm{fix}(\sigma)+k)!}=(-1)^{n+1}\frac{(n^2+(k-1)n-(k-1))}{(k-1)!(n+k-1)(n+k)}.
\end{equation}
\end{theorem}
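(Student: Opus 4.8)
The plan is to mirror the differentiation argument of the preceding theorem, but with $k$-fold \emph{integration} in place of $k$-fold differentiation. The two starting ingredients are the same: the Leibniz expansion $\mathrm{det}\ M_x=\sum_{\sigma\in\mathfrak{S}_n}\epsilon(\sigma)x^{\mathrm{fix}(\sigma)}$ from Eq.\ (\ref{eq2}) and the closed form $\mathrm{det}\ M_x=(x-1+n)(x-1)^{n-1}$ from Eq.\ (\ref{alter}). The key observation is that the $k$-fold iterated primitive of a monomial $x^m$, taken with every lower limit equal to $0$, is
\begin{equation*}
    \frac{m!}{(m+k)!}\,x^{m+k},
\end{equation*}
so that this operation applied term by term to Eq.\ (\ref{eq2}) produces exactly the coefficient $\mathrm{fix}(\sigma)!/(\mathrm{fix}(\sigma)+k)!$ that appears on the left-hand side, multiplied by $x^{\mathrm{fix}(\sigma)+k}$. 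Setting $x=1$ afterwards then removes the power of $x$ and leaves precisely the sum to be evaluated.

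Concretely, I would apply the Cauchy formula for repeated integration to express the $k$-fold primitive of $\mathrm{det}\ M_x$ as a single integral, and evaluate it at $x=1$:
\begin{equation*}
    \frac{1}{(k-1)!}\int_0^1(1-t)^{k-1}(t-1+n)(t-1)^{n-1}\,dt.
\end{equation*}
The substitution $u=t-1$ (so $1-t=-u$ and the limits become $u=-1$ to $u=0$) turns this into the elementary integral
\begin{equation*}
    \frac{(-1)^{k-1}}{(k-1)!}\int_{-1}^0 u^{n+k-2}(u+n)\,du,
\end{equation*}
which splits into two power-rule integrals of $u^{n+k-1}$ and $nu^{n+k-2}$.

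Evaluating at the endpoints (the contribution at $u=0$ vanishes, so only $u=-1$ survives), collecting the powers of $(-1)$, and placing the resulting fractions $\tfrac{1}{n+k}$ and $\tfrac{n}{n+k-1}$ over the common denominator $(n+k-1)(n+k)$ yields the numerator $n^2+(k-1)n-(k-1)$ together with the overall sign $(-1)^{n+1}$, which is exactly the claimed right-hand side.

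The computation is routine; the two things to get right are both matters of bookkeeping. First, one must fix the correct inversion operation: it has to be the iterated integral anchored at $0$, so that all constants of integration vanish and the clean ratio $m!/(m+k)!$ emerges — this is what pins the evaluation point to $x=1$ and the lower limit to $0$. Since the sum over $\mathfrak{S}_n$ is finite, interchanging it with the integral needs no justification, so no analytic subtlety intervenes. The only genuinely delicate step is the sign accounting: one must verify that the factor $(-1)^{k-1}$ from $(1-t)^{k-1}=(-1)^{k-1}u^{k-1}$ combines with the endpoint powers $(-1)^{n+k}$ and $(-1)^{n+k-1}$ to collapse to the stated $(-1)^{n+1}$, and this is where I expect the main (albeit modest) care to be required.
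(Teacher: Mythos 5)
Your proposal is correct, and it reaches the result by a genuinely different route from the paper. Both arguments share the same skeleton: integrate Eq.~(\ref{eq2}) $k$ times from $0$, so that each monomial $x^{m}$ becomes $\frac{m!}{(m+k)!}x^{m+k}$, and then evaluate the $k$-fold primitive of $\mathrm{det}\ M_x=(x-1+n)(x-1)^{n-1}$ at $x=1$. The paper, however, evaluates that primitive by explicit iteration: it computes $P_{n,1}$, $P_{n,2}$, $P_{n,3}$ in closed form, reads off a general pattern for $P_{n,k}(1)$ (asserted from these cases rather than proved by a formal induction), and then reduces the resulting alternating sum $S_n(k)$ to a single binomial coefficient via a telescoping Pascal-identity computation for $T_n(k)=\sum_{m=0}^{n+1}(-1)^m\binom{n+k}{m}$. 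You instead invoke the Cauchy formula for repeated integration to collapse the $k$ nested integrals into the single kernel integral $\frac{1}{(k-1)!}\int_0^1(1-t)^{k-1}\,\mathrm{det}(M_t)\,\mathrm{d}t$, which the substitution $u=t-1$ turns into two power-rule integrals. I checked the bookkeeping you flagged as delicate, and it closes: $\int_{-1}^0 u^{n+k-1}\,\mathrm{d}u=(-1)^{n+k+1}/(n+k)$ and $n\int_{-1}^0 u^{n+k-2}\,\mathrm{d}u=n(-1)^{n+k}/(n+k-1)$, the prefactor satisfies $(-1)^{k-1}(-1)^{n+k}=(-1)^{n+1}$, and over the common denominator one gets $-(n+k-1)+n(n+k)=n^2+(k-1)n-(k-1)$, exactly the claimed numerator. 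Your route buys both brevity and rigor: it treats all $k\geq 1$ uniformly (for $k=1$ the kernel is trivial) and sidesteps the pattern-extrapolation step that the paper leaves unjustified in passing from $k=1,2,3$ to general $k$. What the paper's longer route buys is the full polynomials $P_{n,k}(x)$ as intermediate objects, which it later exploits to suggest further sum rules at other values of $x$; note, though, that your approach recovers those too, since the Cauchy kernel with upper limit $x$ gives $P_{n,k}(x)=\frac{1}{(k-1)!}\int_0^x(x-t)^{k-1}\,\mathrm{det}(M_t)\,\mathrm{d}t$.
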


\begin{proof}
The left-hand side of Eq.\ (\ref{eq8})
is obtained by successive integrations of 
\begin{equation*}
    \sum_{\sigma\in\mathfrak{S}_n}\epsilon(\sigma)x^{\mathrm{fix}(\sigma)}.
\end{equation*}
According to Eq.\ (\ref{eq2}), it is therefore equal to $P_{n,k}(1)$ with, for $k\geq 2$:
\begin{equation}\label{Pnkx}
    P_{n,k}(x)=\int_0^x\left(\int_0^{x_{k-1}}\left(\int_0^{x_{k-2}}\cdots\left(\int_0^{x_2}\left(\int_0^{x_1}\mathrm{det}(M_{u})\mathrm{d}u\right)\mathrm{d}x_1\right)\cdots\right)\mathrm{d}x_{k-2}\right)\mathrm{d}x_{k-1},
\end{equation}
or also
\begin{equation*}
    P_{n,k}(x)=\int_0^xP_{n,k-1}(u)\mathrm{d}u.
\end{equation*}
Since $\mathrm{det}(M_x)=(x-1)^n+n(x-1)^{n-1}$, one has in particular
\begin{equation}\label{p1x}
    P_{n,1}(x)=\int_0^x\mathrm{det}(M_{u})\mathrm{d}u=(-1)^{n+1}\frac{n}{n+1}+\frac{(x-1)^{n+1}}{n+1}+(x-1)^n,
\end{equation}
as well as
\begin{equation*}
    P_{n,2}(x)=\int_0^x\left(\int_0^{x_1}\mathrm{det}(M_{u})\mathrm{d}u\right)\mathrm{d}x_1=\int_0^xP_{n,1}(u)\mathrm{d}u
\end{equation*}
yielding
\begin{equation}\label{p2x}
    P_{n,2}(x)=(-1)^{n+1}\frac{nx}{(n+1)}+\frac{(-1)^{n+1}}{(n+1)(n+2)}+\frac{(-1)^{n}}{(n+1)}+\frac{(x-1)^{n+2}}{(n+1)(n+2)}+\frac{(x-1)^{n+1}}{(n+1)}.
\end{equation}  
Applying the same procedure, the polynomial for $k=3$ gives
\begin{equation*}
    P_{n,3}(x)=\int_0^x\left(\int_0^{x_1}\left(\int_0^{x_2}\mathrm{det}(M_{u})\mathrm{d}u\right)\mathrm{d}x_1\right)\mathrm{d}x_2=\int_0^xP_{n,2}(u)\mathrm{d}u
\end{equation*}
leading to
\begin{align}\label{p3x}
    P_{n,3}(x)&=(-1)^{n+1}\frac{nx^2}{2(n+1)}\nonumber\\
    &+(-1)^{n+1}\frac{x}{(n+1)(n+2)}+(-1)^{n}\frac{x}{(n+1)}\nonumber\\
    &+\frac{(-1)^{n}}{(n+1)(n+2)(n+3)}+\frac{(-1)^{n+1}}{(n+1)(n+2)}\nonumber\\
    &+\frac{(x-1)^{n+3}}{(n+1)(n+2)(n+3)}+\frac{(x-1)^{n+2}}{(n+1)(n+3)}.
\end{align}  
Setting $k=1$, 2 and 3 in Eqs.\ (\ref{p1x}), (\ref{p2x}) and (\ref{p3x}) gives (all the terms proportional to powers of $x-1$ vanish):
\begin{equation*}
    P_{n,1}(1)=(-1)^{n+1}\frac{n}{(n+1)},
\end{equation*}
\begin{equation*}
    P_{n,2}(1)=(-1)^{n+1}\frac{n}{(n+1)}+\frac{(-1)^{n+1}}{(n+1)(n+2)}+\frac{(-1)^{n}}{(n+1)},
\end{equation*}  
and
\begin{align*}
    P_{n,3}(1)&=(-1)^{n+1}\frac{n}{2(n+1)}\nonumber\\
    &+\frac{(-1)^{n+1}}{(n+1)(n+2)}+\frac{(-1)^{n}}{(n+1)}\nonumber\\
    &+\frac{(-1)^{n}}{(n+1)(n+2)(n+3)}+\frac{(-1)^{n+1}}{(n+1)(n+2)}.
\end{align*}  
Thus, for any value of $k\geq 2$, we have 
\begin{equation*}
    P_{n,k}(1)=\frac{(-1)^{n+1}n}{(n+1)(k-1)!}-\sum_{j=2}^k\frac{(-1)^{n+j}}{(k-j)!}\left( \frac{1}{\prod_{l=n+1}^{n+j}l}-\frac{1}{\prod_{l=n+1}^{n+j-1}l}\right),
\end{equation*}
with
\begin{equation*}
    \sum_{j=2}^k\frac{1}{(k-j)!}\frac{(-1)^{n+j}}{\prod_{l=n+1}^{n+j}l}=\sum_{j=2}^k\frac{(-1)^{n+j}}{(k-j)!}\frac{n!}{(n+j)!}.
\end{equation*}
Let us set
\begin{align*}
    S_n(k)=\sum_{j=2}^k\frac{(-1)^{n+j}}{(k-j)!(n+j)!},
\end{align*}
so that we can write
\begin{equation}\label{eq10}
    P_{n,k}(1)=\frac{(-1)^{n+1}n}{(n+1)(k-1)!}-n!\,S_n(k)-n!\,S_{n-1}(k).
\end{equation}
Let us now assume that $k\geq 2$. We have
\begin{align*}
    S_n(k)&=\sum_{j=2}^k\frac{(-1)^{n+j}}{(k-j)!(n+j)!}\\
    &=\frac{1}{(n+k)!}\sum_{j=2}^k\frac{(-1)^{n+j}(n+k)!}{(k-j)!(n+j)!}.
\end{align*}
The latter quantity is equal to
\begin{align*}
    S_n(k)&=\frac{1}{(n+k)!}\sum_{j=2}^k\binom{n+k}{n+j}(-1)^{n+j}\\
    &=\frac{1}{(n+k)!}\sum_{m=n+2}^{n+k}\binom{n+k}{m}(-1)^{m}\\
    &=\frac{1}{(n+k)!}\left(\sum_{m=0}^{n+k}(-1)^m\binom{n+k}{m}-\sum_{m=0}^{n+1}(-1)^m\binom{n+k}{m}\right)\\
    &=-\frac{1}{(n+k)!}\sum_{m=0}^{n+1}(-1)^m\binom{n+k}{m}\\
    &=-\frac{T_n(k)}{(n+k)!},
\end{align*}
with
\begin{equation*}
    T_n(k)=\sum_{m=0}^{n+1}\binom{n+k}{m}(-1)^m.
\end{equation*}
Since
\begin{equation*}
    \binom{n+k}{m}=\binom{n+k-1}{m}+\binom{n+k-1}{m-1},
\end{equation*}
we get
\begin{align*}
    T_n(k)&=\sum_{m=1}^{n+1}(-1)^m\binom{n+k-1}{m}+\sum_{m=1}^{n+1}(-1)^m\binom{n+k-1}{m-1}+1\\
    &=\sum_{m=1}^{n+1}(-1)^m\binom{n+k-1}{m}+\sum_{m=0}^{n}(-1)^{m+1}\binom{n+k-1}{m}+1\\
    &=(-1)^{n+1}\binom{n+k-1}{n+1}-1+1\\
    &=(-1)^{n-1}\binom{n+k-1}{n+1}.
\end{align*}
We therefore obtain 
\begin{align*}
    S_n(k)&=\frac{(-1)^{n}(n+k-1)!}{(n+1)!(k-2)!(n+k)!}\\
    &=\frac{(-1)^{n}}{(n+1)!(k-2)!(n+k)},
\end{align*}
which, combined with Eq.\ (\ref{eq10}), yields
\begin{align*}
    P_{n,k}(1)&=\frac{(-1)^{n+1}n}{(n+1)(k-1)!}-n!\frac{(-1)^{n}}{(n+1)!(k-2)!(n+k)}-n!\frac{(-1)^{n+1}}{n!(k-2)!(n-1+k)}\\
    &=(-1)^{n+1}\frac{(n^2+(k-1)n-(k-1))}{(k-1)!(n+k-1)(n+k)},
\end{align*}
which completes the proof.
\end{proof}

For $k=1$, Eq.\ (\ref{eq8}) becomes
\begin{equation*}
    \sum_{\sigma\in\mathfrak{S}_n}\frac{\epsilon(\sigma)}{(\mathrm{fix}(\sigma)+1)}=(-1)^{n+1}\frac{n}{(n+1)},
\end{equation*}
while for $k=2$ one has
\begin{align*}
    \sum_{\sigma\in\mathfrak{S}_n}\frac{\epsilon(\sigma)}{(\mathrm{fix}(\sigma)+1)(\mathrm{fix}(\sigma)+2)}&=(-1)^{n+1}\frac{(n+\phi)(n+1-\phi)}{(n+1)(n+2)}\\
    &=(-1)^{n+1}\frac{(n+\phi)(n-1/\phi)}{(n+1)(n+2)},
\end{align*}
where $\phi$ is the golden ratio, and in the $k=3$ case:
\begin{equation*}
    \sum_{\sigma\in\mathfrak{S}_n}\frac{\epsilon(\sigma)}{(\mathrm{fix}(\sigma)+1)(\mathrm{fix}(\sigma)+2)(\mathrm{fix}(\sigma)+3)}=(-1)^{n+1}\frac{\left((n+1)^2-3\right)}{2(n+2)(n+3)}.
\end{equation*}
Here also, as in the first section, we have set $x=1$, but of course, further sum rules can be obtained by taking different values of $x$ in Eq.\ (\ref{Pnkx}) and using
\begin{equation*}
    P_{n,k}(x)=\sum_{\sigma\in\mathfrak{S}_n}\epsilon(\sigma)\frac{(\mathrm{fix}(\sigma))!}{(\mathrm{fix}(\sigma)+k)!}x^{\mathrm{fix}(\sigma)+k}.
\end{equation*}

\section{Conclusion}

We introduced two families of identities concerning permutations with fixed points, obtained respectively through successive differentiation and repeated integration of the determinant of a specific matrix. Beyond these results, the methods presented here may lead to the derivation of further sum rules.

\bigskip
\hrule
\bigskip

\noindent 2020 {\it Mathematics Subject Classification}:
05A05 ; 05A15.

\noindent \emph{Keywords: } permutation, fixed point, sum rule, linear algebra, determinant.

\bigskip
\hrule
\bigskip

\noindent \emph{Concerned with OEIS sequences: } A000166 and A008290.

\end{document}